\date{\today}
\newtheorem{theorem}{Теорема}
\newtheorem{proposition}{Твердження}
\newtheorem{corollary}{Наслiдок}
\newtheorem{lemma}{Лема}
\theoremstyle{definition}
\newtheorem{remark}{Зауваження}
\begin{document}

\title[Напiвгрупа скiнченних часткових порядкових iзоморфiзмiв обмеженого рангу]{Напiвгрупа скiнченних часткових порядкових iзоморфiзмiв обмеженого рангу нескiнченно\"{\i} лiнiйно впорядковано\"{\i}  множини}

\author[Олег~Гутік, Максим Щипель]{Олег~Гутік, Максим Щипель}
\address{Львівський національний університет ім. Івана Франка, Університецька 1, Львів, 79000, Україна}
\email{oleg.gutik@lnu.edu.ua, maksym.shchypel@lnu.edu.ua}

\keywords{Інверсна напівгрупа, часткове перетворення, частковий порядковий ізоморфізм, конґруенція, відношення Ґріна, стійка напівгрупа}

\subjclass[2020]{20M15,  20M50, 18B40.}

\begin{abstract}
Ми вивчаємо алгебричні властивості напівгрупи $\mathscr{O\!\!I\!}_n(L)$  скiнченних часткових порядкових iзоморфiзмiв рангу $\leq n$ нескінченної лiнiйно впорядкованої  множини $(L,\leqslant)$. Зокрема описано її ідемпотенти, природний частковий порядок та відношення Ґріна на $\mathscr{O\!\!I\!}_n(L)$. Доведено, що напівгрупа $\mathscr{O\!\!I\!}_n(L)$ стійка та містить щільний ряд ідеалів, а також, що всі конґруенції на напівгрупі $\mathscr{O\!\!I\!}_n(L)$ є конґруенціями Ріса.

\bigskip
\noindent
\emph{Oleg Gutik, Maksym Shchypel, \textbf{The semigroup of finite partial order isomorphisms of a bounded rank of an infinite linear ordered set}.}

\smallskip
\noindent
We study algebraic properties of the semigroup $\mathscr{O\!\!I\!}_n(L)$ of finite partial order isomorphisms of the rank $\leq n$ of an infinite linearly ordered set $(L,\leqslant)$. In particular we describe its idempotents, the natural partial order and Green's relations on $\mathscr{O\!\!I\!}_n(L)$. It is proved that the semigroup $\mathscr{O\!\!I\!}_n(L)$ is stable and it contains tight ideal series. Moreover, we show that the semigroup  $\mathscr{O\!\!I\!}_n(L)$ admits only Rees' congruences and every its homomorphic image is a semigroup with tight ideal series.

\end{abstract}

\maketitle



У цій праці ми користуємося термінологією з монографій \cite{Clifford-Preston-1961, Clifford-Preston-1967, Lawson-1998, Petrich-1984}.

Якщо визначене часткове відображення $\alpha\colon X\rightharpoonup Y$ з множини $X$ у множину $Y$, то через $\operatorname{dom}\alpha$ i $\operatorname{ran}\alpha$ будемо позначати його \emph{область визначення} та \emph{область значень}, відповідно, а через $(x)\alpha$ і $(A)\alpha$ --- образи елемента $x\in\operatorname{dom}\alpha$ та підмножини $A\subseteq\operatorname{dom}\alpha$ при частковому відображенні $\alpha$, відповідно.
Також через $\operatorname{rank}\alpha$ будемо позначати ранг часткового відображення $\alpha\colon X\rightharpoonup Y$, тобто $\operatorname{rank}\alpha=|\operatorname{dom}\alpha|$.

Якщо $S$ --- напівгрупа, то визначатимемо відношення Ґріна $\mathscr{R}$, $\mathscr{L}$, $\mathscr{D}$, $\mathscr{H}$ і $\mathscr{J}$ на $S$ так:
\begin{align*}
    &\qquad a\mathscr{R}b \mbox{ тоді і лише тоді, коли } aS^1=bS^1;\\
    &\qquad a\mathscr{L}b \mbox{ тоді і лише тоді, коли } S^1a=S^1b;\\
    &\qquad a\mathscr{J}b \mbox{ тоді і лише тоді, коли } S^1aS^1=S^1bS^1;\\
    &\qquad \mathscr{D}=\mathscr{L}\circ\mathscr{R}=          \mathscr{R}\circ\mathscr{L};\\
    &\qquad \mathscr{H}=\mathscr{L}\cap\mathscr{R}.
\end{align*}
 (див. означення в \cite[\S2.1]{Clifford-Preston-1961} або \cite{Green-1951}).

Відношення еквівалентності $\mathfrak{K}$ на напівгрупі $S$ називається \emph{конґруенцією}, якщо для елементів $a$ i $b$ напівгрупи $S$ з того, що виконується умова $(a,b)\in\mathfrak{K}$ випливає, що $(ca,cb), (ad,bd) \in\mathfrak{K}$, для всіх $c,d\in S$. Відношення $(a,b)\in\mathfrak{K}$ також будемо записувати $a\mathfrak{K}b$, і в цьому випадку будемо говорити, що \emph{елементи $a$ i $b$ є $\mathfrak{K}$-еквівалентними}. На кожній напівгрупі $S$ існують наступні конґруенції: \emph{універсальна} $\mathfrak{U}_S=S\times S$ та \emph{одинична} (\emph{діагональ}) $\Delta_S=\{(s,s)\colon s\in S\}$. Такі конґруенції називаються \emph{тривіальними}. Кожен двобічний ідеал $I$ напівгрупи $S$ породжує на ній конґруенцію Ріса: $\mathfrak{K}_I=(I\times I)\cup\Delta_S$.

Надалі через $E(S)$ позначатимемо множину ідемпотентів напівгрупи $S$. Напівгрупа ідемпотентів називається \emph{в'язкою}, а комутативна напівгрупа ідемпотентів --- \emph{напівґраткою}.

Якщо $S$~--- напівгрупа, то на $E(S)$ визначено частковий порядок:
$
e\preccurlyeq f
$   тоді і лише тоді, коли
$ef=fe=e$.
Так означений частковий порядок на $E(S)$ називається \emph{при\-род\-ним}.

Напівгрупа $S$ називається \emph{інверсною}, якщо для довільного елемента $s\in S$ існує єдиний елемент $s^{-1}\in S$ такий, що $ss^{-1}s=s$ i $s^{-1}ss^{-1}=s^{-1}$~\cite{Wagner-1952}. В інверсній напівгрупі $S$ вище означений елемент $s^{-1}$ називається \emph{інверсним до} $s$.

Означимо відношення $\preccurlyeq$ на інверсній напівгрупі $S$ так:
$
    s\preccurlyeq t
$
тоді і лише тоді, коли $s=te$, для деякого ідемпотента $e\in S$. Так означений частковий порядок назива\-єть\-ся \emph{при\-род\-ним част\-ковим порядком} на інверсній напівгрупі $S$~\cite{Wagner-1952}. Очевидно, що звуження природного часткового порядку $\preccurlyeq$ на інверсній напівгрупі $S$ на її в'язку $E(S)$ є при\-род\-ним частковим порядком на $E(S)$.

Через $\mathscr{I}(X)$ позначимо множину всіх часткових взаємнооднозначних перетворень множини $X$ разом з такою напівгруповою операцією
\begin{equation*}
    x(\alpha\beta)=(x\alpha)\beta \quad \mbox{якщо} \quad
    x\in\operatorname{dom}(\alpha\beta)=\{
    y\in\operatorname{dom}\alpha\colon
    y\alpha\in\operatorname{dom}\beta\}, \qquad \mbox{для} \quad
    \alpha,\beta\in\mathscr{I}_\lambda.
\end{equation*}
Напівгрупа $\mathscr{I}(X)$ називається  \emph{симетричною інверсною напівгрупою} або \emph{симетричним інверсним моноїдом} над множиною $X$~(див.  \cite{Clifford-Preston-1961}). Симетрична інверсна напівгрупа введена В. В. Вагнером у працях~\cite{Wagner-1952a, Wagner-1952} і вона відіграє важливу роль у теорії напівгруп. Надалі, якщо для $\alpha,\beta\in \mathscr{I}(X)$ виконуються умови $\operatorname{dom}\alpha\subseteq \operatorname{dom}\beta$ i $(x)\beta=(x)\alpha$ для довільного $x\in \operatorname{dom}\alpha$. то будемо писати $\alpha\subseteq \beta$.

Надалі будемо вважати, що $(L,\leqslant)$~--- нескінченна лінійно впорядкована множина. Для елементів $x,y\in L$ умову $x\leqslant y$ i $x\neq y$ записуватимемо так: $x<y$. Елемент $\alpha\in\mathscr{I}(L)$ називається \emph{частковим порядковим iзоморфiзмом}, якщо  для довільних $x_1,x_2\in \operatorname{dom}\alpha$ з $x_1\leqslant x_2$ випливає $(x_1)\alpha\leqslant (x_2)\alpha$. Позаяк $\leqslant$~--- лінійний порядок на $L$, то для $\alpha\in\mathscr{I}(L)$ і для до\-віль\-них $x_1,x_2\in \operatorname{dom}\alpha$ з $(x_1)\alpha\leqslant (x_2)\alpha$ випливає $x_1\leqslant x_2$. Очевидно, що композиція часткових порядкових iзоморфiзмiв лінійно впорядкованої множини $(L,\leqslant)$ є частковим порядковим iзоморфiзмом, і обернене часткове відображення до часткового порядкового iзоморфiзму є знову частковим порядковим iзоморфiзмом. Через $\mathscr{O\!\!I\!}(L)$ позначимо напівгрупу всіх часткових порядкових iзоморфiзмів лінійно впорядкованої множини $(L,\leqslant)$. Очевидно, що $\mathscr{O\!\!I\!}(L)$~--- інверсна напівгрупа симетричного інверсного моноїда $\mathscr{I}(L)$ над множиною $L$.

Для довільного натурального числа $n$ визначимо
\begin{equation*}
  \mathscr{O\!\!I\!}_n(L)=\left\{\alpha\in\mathscr{O\!\!I\!}(L)\colon |\operatorname{dom}\alpha|\leq n\right\}.
\end{equation*}
Очевидно, що $\mathscr{O\!\!I\!}_n(L)$~--- інверсна піднапівгрупа симетричного інверсного моноїда $\mathscr{O\!\!I\!}(L)$ над множиною $L$. Надалі напівгрупу $\mathscr{O\!\!I}\!_n(L)$ будемо називати \emph{напiвгрупою скiнченних часткових порядкових iзоморфiзмiв рангу $\leq n$ лiнiйно впорядкованої  множини} $(L,\leqslant)$. Надалі якщо елемент $\alpha$ напівгрупи $\mathscr{O\!\!I}\!_n(L)$ відображає $x_1$ у $y_1$, $\ldots$, $x_k$ у $y_k$, де $x_1,\ldots x_k,y_1,\ldots y_k\in L$, $x_1<\cdots< x_k$, $y_1<\cdots< y_k$, $k\leq n$, то записуватимемо так;
\begin{equation*}
  \alpha=
\begin{pmatrix}
x_1 & \cdots & x_k\\
y_1 & \cdots & y_k
\end{pmatrix}.
\end{equation*}
Порожнє часткове перетворення множини $L$ будемо позначати символом $\boldsymbol{0}$. Очевидно, що $\boldsymbol{0}$ --- нуль напівгрупи $\mathscr{O\!\!I}\!_n(L)$.

Однією з класичних задач теорії напівгруп перетворень є дослідження будови напівгрупи перетворень множини, які зберігають структуру множини (геометрію, частковий порядок, топологію), зокрема, коли ці перетворення є локальними, тобто частковими еквівалентностями (частковими ізометріями, частковими  порядковими ізоморфізмами, частковими гомеоморфізмами, тощо) \cite{Gluskin-Schein-Shneperman-Yaroker-1967, Magill-1976}.
У цій праці ми досліджуємо алгебричні властивості напівгрупи $\mathscr{O\!\!I\!}_n(L)$ скінченних часткових порядкових iзоморфiзмiв обмеженого рангу лінійно впорядкованої множини $(L,\leqslant)$. Зокрема описано її ідемпотенти, природний частковий порядок та відношення Ґріна на $\mathscr{O\!\!I\!}_n(L)$. Доведено, що напівгрупа $\mathscr{O\!\!I\!}_n(L)$ стійка та містить щільний ряд ідеалів, а також, що всі конґруенції на напівгрупі $\mathscr{O\!\!I\!}_n(L)$ є конґруенціями Ріса. 


\begin{lemma}\label{lemma-2.1}
Нехай $(L,\leqslant)$~--- нескінченна лінійно впорядкована множина. Тоді для довільного натурального числа $k$ і для довільних $x_1,\ldots x_k,y_1,\ldots y_k\in L$ таких, що $x_1<\cdots< x_k$, $y_1<\cdots< y_k$ існує єдиний частковий порядковий ізоморфізм $\alpha\colon L\rightharpoonup L$ такий, що $\operatorname{dom}\alpha=\left\{x_1,\ldots x_k\right\}$, $\operatorname{ran}\alpha=\left\{y_1,\ldots y_k\right\}$. Більше того, частковий порядковий ізоморфізм $\alpha\colon L\rightharpoonup L$ визначається так: $(x_1)\alpha=y_1, \ldots,(x_k)\alpha=y_k$.
\end{lemma}

\begin{proof}
Позаяк $(L,\leqslant)$~--- лінійно впорядкована множина, то кожна скінченна підмножина в $(L,\leqslant)$ має найменший елемент. За індукцією визначаємо  частковий порядковий ізоморфізм $\alpha\colon L\rightharpoonup L$ так: $(x_1)\alpha=y_1, \ldots,(x_k)\alpha=y_k$. Позаяк множини $\left\{x_1,\ldots x_k\right\}$, $\left\{y_1,\ldots y_k\right\}$ скінченні, то такий частковий порядковий ізоморфізм $\alpha$ єдиний.
\end{proof}

Нагадаємо \cite{Koch-Wallace-1957}, що напівгрупа $S$ називається \emph{стійкою}, якщо
\begin{enumerate}
  \item[(1)] $a,b\in S$ і з $Sa\subseteq Sab$ випливає, що $Sa=Sab$,
  \item[(2)] $a,b\in S$ і з $aS\subseteq baS$ випливає, що $aS=baS$.
\end{enumerate}

\begin{theorem}\label{theorem-2.1}
Нехай $(L,\leqslant)$~--- нескінченна лінійно впорядкована множина. Тоді для довільного натурального числа $n$ напівгрупа $\mathscr{O\!\!I}\!_n(L)$ є стійкою.
\end{theorem}

\begin{proof}
Припустимо, що для деяких елементів $\alpha,\beta\in \mathscr{O\!\!I}\!_n(L)$  виконується вклю\-чен\-ня
\begin{equation}\label{eq-2.1}
\alpha\mathscr{O\!\!I}\!_n(L)\subseteq \beta\alpha\mathscr{O\!\!I}\!_n(L).
\end{equation}
Позаяк $\mathscr{O\!\!I}\!_n(L)$ --- інверсна напівгрупа, то за теоремою 1.17 з \cite{Clifford-Preston-1961} маємо, що
\begin{equation*}
\alpha\mathscr{O\!\!I}\!_n(L)=\alpha\alpha^{-1}\mathscr{O\!\!I}\!_n(L) \quad \hbox{i} \quad \beta\alpha\mathscr{O\!\!I}\!_n(L)=\beta\alpha\alpha^{-1}\beta^{-1}\mathscr{O\!\!I}\!_n(L),
\end{equation*}
а отже,
\begin{equation}\label{eq-2.2}
\alpha\alpha^{-1}\mathscr{O\!\!I}\!_n(L)\subseteq \beta\alpha\alpha^{-1}\beta^{-1}\mathscr{O\!\!I}\!_n(L).
\end{equation}
З включення \eqref{eq-2.2} випливає, що $\operatorname{dom}\alpha\subseteq \operatorname{dom}\beta$ i $\operatorname{dom}\alpha\subseteq \operatorname{ran}\beta$. З включення \eqref{eq-2.1} маємо, що існує такий елемент $\gamma\in\mathscr{O\!\!I}\!_n(L)$, що
\begin{equation}\label{eq-2.3}
\alpha=\beta\alpha\gamma.
\end{equation}

Нехай $\operatorname{rank}\alpha=k\leq n$ i $\operatorname{dom}\alpha=\{x_1,\ldots,x_k\}$, причому $x_1<\cdots< x_k$ в $(L,\leqslant)$. З рівності \eqref{eq-2.3} випливає, що $(x_1)\beta^{-1}=x_1$, $\ldots,$ $(x_k)\beta^{-1}=x_k$, оскільки $(L,\leqslant)$ --- лінійно впорядкована множина, $\alpha,\beta\in\mathscr{O\!\!I}\!_n(L)$ і $x_1<\cdots< x_k$ в $(L,\leqslant)$. Отже, отримуємо, що $(x_1)\beta=x_1$, $\ldots,$ $(x_k)=x_k$, звідки випливає рівність $\alpha=\beta\alpha$. Тоді, очевидно, що $\alpha\mathscr{O\!\!I}\!_n(L)= \beta\alpha\mathscr{O\!\!I}\!_n(L).$

Доведення твердження, що з включення $\mathscr{O\!\!I}\!_n(L)\alpha\subseteq\mathscr{O\!\!I}\!_n(L)\alpha\beta$ для $\alpha,\beta\in\mathscr{O\!\!I}\!_n(L)$ випливає рівність $\mathscr{O\!\!I}\!_n(L)\alpha=\mathscr{O\!\!I}\!_n(L)\alpha\beta$, аналогічне з точністю до дуальності.
\end{proof}

\begin{proposition}\label{proposition-2.2}
\begin{enumerate}
  \item[(1)] Ненульовий елемент $\alpha$ напівгрупи $\mathscr{O\!\!I}\!_n(L)$ є ідемпотентом тоді і лише тоді, коли $\alpha$~--- тотожне часткове перетворення.

  \item[(2)] $\alpha\preccurlyeq\beta$ в $\mathscr{O\!\!I}\!_n(L)$ тоді і лише тоді, коли $\alpha\subseteq \beta$.

  \item[(3)] $\alpha\mathscr{R}\beta$ в $\mathscr{O\!\!I}\!_n(L)$ тоді і лише тоді, коли $\operatorname{dom}\alpha=\operatorname{dom}\beta$.

  \item[(4)] $\alpha\mathscr{L}\beta$ в $\mathscr{O\!\!I}\!_n(L)$ тоді і лише тоді, коли $\operatorname{ran}\alpha=\operatorname{ran}\beta$.

  \item[(5)] $\alpha\mathscr{H}\beta$ в $\mathscr{O\!\!I}\!_n(L)$ тоді і лише тоді, коли $\alpha=\beta$.

  \item[(6)] $\alpha\mathscr{D}\beta$ в $\mathscr{O\!\!I}\!_n(L)$ тоді і лише тоді, коли $|\operatorname{dom}\alpha|=|\operatorname{dom}\beta|$.

  \item[(7)] $\mathscr{D}=\mathscr{J}$ в $\mathscr{O\!\!I}\!_n(L)$.
\end{enumerate}
\end{proposition}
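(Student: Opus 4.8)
The plan is to handle all seven items inside the inverse semigroup $\mathscr{O\!\!I}\!_n(L)$, leaning on two things: the structure theory of Green's relations in an inverse semigroup, and Lemma~\ref{lemma-2.1}, which says that an order isomorphism is uniquely determined by the pair $(\operatorname{dom}\alpha,\operatorname{ran}\alpha)$. For (1), if $\alpha^2=\alpha$ then each $x\in\operatorname{dom}\alpha$ has $(x)\alpha\in\operatorname{dom}\alpha$ with $((x)\alpha)\alpha=(x)\alpha$, so injectivity forces $(x)\alpha=x$ and $\alpha$ is a partial identity; conversely every partial identity of rank $\le n$ preserves the order, lies in $\mathscr{O\!\!I}\!_n(L)$, and is idempotent. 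This pins down $E(\mathscr{O\!\!I}\!_n(L))$ as the set of partial identities, which feeds the rest.

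For (2) I would use the definition $\alpha\preccurlyeq\beta\Leftrightarrow\alpha=\beta e$ with $e\in E(\mathscr{O\!\!I}\!_n(L))$. By (1), $e$ is the partial identity on some set $A$, so $\beta e$ is the restriction of $\beta$ to $\{x\in\operatorname{dom}\beta:(x)\beta\in A\}$, whence $\alpha=\beta e\subseteq\beta$; conversely, if $\alpha\subseteq\beta$ then $e=\alpha^{-1}\alpha$, the partial identity on $\operatorname{ran}\alpha$, satisfies $\beta e=\alpha$. Items (3) and (4) follow from the intrinsic inverse-semigroup descriptions $\alpha\mathscr{R}\beta\Leftrightarrow\alpha\alpha^{-1}=\beta\beta^{-1}$ and $\alpha\mathscr{L}\beta\Leftrightarrow\alpha^{-1}\alpha=\beta^{-1}\beta$: since $\alpha\alpha^{-1}$ and $\alpha^{-1}\alpha$ are the partial identities on $\operatorname{dom}\alpha$ and $\operatorname{ran}\alpha$, these equalities say exactly $\operatorname{dom}\alpha=\operatorname{dom}\beta$ and $\operatorname{ran}\alpha=\operatorname{ran}\beta$.

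The substantive step is (5): since $\mathscr{H}=\mathscr{L}\cap\mathscr{R}$, parts (3) and (4) give $\operatorname{dom}\alpha=\operatorname{dom}\beta$ and $\operatorname{ran}\alpha=\operatorname{ran}\beta$, and then Lemma~\ref{lemma-2.1} forces $\alpha=\beta$. This is precisely where the linear order collapses the $\mathscr{H}$-classes to singletons, in contrast with $\mathscr{I}(L)$, whose $\mathscr{H}$-classes are symmetric groups; I expect it to be the conceptual heart of the proposition, though technically short. For (6) I use $\mathscr{D}=\mathscr{R}\circ\mathscr{L}$: if $\alpha\mathscr{R}\gamma\mathscr{L}\beta$ then $\operatorname{dom}\alpha=\operatorname{dom}\gamma$ and $\operatorname{ran}\gamma=\operatorname{ran}\beta$, and $|\operatorname{dom}|=|\operatorname{ran}|$ for partial bijections gives $|\operatorname{dom}\alpha|=|\operatorname{dom}\beta|$; conversely, when $|\operatorname{dom}\alpha|=|\operatorname{dom}\beta|=k$, Lemma~\ref{lemma-2.1} furnishes the order isomorphism $\gamma$ with $\operatorname{dom}\gamma=\operatorname{dom}\alpha$ and $\operatorname{ran}\gamma=\operatorname{ran}\beta$, realizing $\alpha\mathscr{D}\beta$.

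Finally, (7) only needs $\mathscr{J}\subseteq\mathscr{D}$, as $\mathscr{D}\subseteq\mathscr{J}$ is automatic. If $\alpha\mathscr{J}\beta$, write $\alpha=\sigma\beta\tau$ and $\beta=\sigma'\alpha\tau'$ in $\mathscr{O\!\!I}\!_n(L)^1$; since the rank of a product of partial bijections cannot exceed the rank of any factor, this yields $\operatorname{rank}\alpha\le\operatorname{rank}\beta\le\operatorname{rank}\alpha$, hence $|\operatorname{dom}\alpha|=|\operatorname{dom}\beta|$, and (6) gives $\alpha\mathscr{D}\beta$. The main point to watch throughout is that these are the Green's relations of the subsemigroup $\mathscr{O\!\!I}\!_n(L)$, not of the ambient $\mathscr{I}(L)$: for $\mathscr{R}$ and $\mathscr{L}$ this is harmless, because their idempotent characterizations are internal to any inverse semigroup, but for $\mathscr{D}$ and $\mathscr{J}$ one must exhibit the connecting elements and the rank bounds inside $\mathscr{O\!\!I}\!_n(L)$, which is exactly what Lemma~\ref{lemma-2.1} provides.
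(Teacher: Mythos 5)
Your proof is correct, and for items (1)--(6) it is essentially the paper's argument, except that where the paper simply cites Lawson's book (for the description of idempotents, of the natural partial order, and the intrinsic characterizations $\alpha\mathscr{R}\beta\Leftrightarrow\alpha\alpha^{-1}=\beta\beta^{-1}$, $\alpha\mathscr{L}\beta\Leftrightarrow\alpha^{-1}\alpha=\beta^{-1}\beta$) you verify these facts directly; your (5) and (6) coincide with the paper's, including the use of Lemma~\ref{lemma-2.1} both to collapse $\mathscr{H}$-classes to singletons and to build the connecting element $\gamma$ (the paper routes $\gamma$ through $\operatorname{ran}\alpha\to\operatorname{dom}\beta$, you through $\operatorname{dom}\alpha$ and $\operatorname{ran}\beta$; these are interchangeable). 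The genuine divergence is item (7). The paper deduces $\mathscr{D}=\mathscr{J}$ from stability: it first proves that $\mathscr{O\!\!I}\!_n(L)$ is stable (Theorem~\ref{theorem-2.1}) and then invokes Theorem~1 of Koch and Wallace, which says that in a stable semigroup $\mathscr{D}$ and $\mathscr{J}$ coincide. You instead prove $\mathscr{J}\subseteq\mathscr{D}$ directly: from $\alpha=\sigma\beta\tau$ and $\beta=\sigma'\alpha\tau'$ the monotonicity of rank under composition of partial bijections gives $\operatorname{rank}\alpha=\operatorname{rank}\beta$, and (6) finishes. Your argument is self-contained and elementary, needs nothing beyond the rank inequality (so it would work verbatim in any semigroup of finite-rank partial bijections, with no order structure at all), whereas the paper's version earns its keep by reusing the stability theorem, which is established anyway for its own sake and is needed later (stability also drives the ideal-series and congruence results). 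Two small points you handled well and should keep explicit: the adjoined identity in $\mathscr{O\!\!I}\!_n(L)^1$ does not disturb the rank bound, and the $\mathscr{R}$-, $\mathscr{L}$-characterizations are intrinsic to any inverse semigroup, so computing Green's relations in the subsemigroup $\mathscr{O\!\!I}\!_n(L)$ rather than in $\mathscr{I}(L)$ is legitimate for (3) and (4) --- this is exactly the subtlety the paper's citation of Lawson's Proposition~3.2.11 is meant to cover.
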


\begin{proof}
Твердження (1) i (2) випливають з означення напівгрупи $\mathscr{O\!\!I}\!_n(L)$ та описання ідемпотентів симетричного інверсного моноїда $\mathscr{I}(L)$ і природного часткового часткового порядку на ньому (див. \cite[підрозділи~1.1 і 3.2]{Lawson-1998}).

Твердження (3) i (4) випливають з означень відношень Ґріна $\mathscr{R}$ і $\mathscr{L}$ на $\mathscr{I}(L)$ і твердження~3.2.11 з \cite{Lawson-1998}.

Твердження (5) випливає з тверджень (3), (4) і леми \ref{lemma-2.1}.

(6) Нехай $\alpha\mathscr{D}\beta$ в $\mathscr{O\!\!I}\!_n(L)$. Тоді існує частковий порядковий ізоморфізм $\gamma\in\mathscr{O\!\!I}\!_n(L)$ такий, що $\alpha\mathscr{R}\gamma$ і $\gamma\mathscr{L}\beta$. З тверджень (3) i (4) випливає, що $\operatorname{dom}\alpha=\operatorname{dom}\gamma$ і $\operatorname{ran}\gamma=\operatorname{ran}\beta$. Позаяк $\beta$ i $\gamma$~--- часткові бієкції, то
\begin{equation*}
  |\operatorname{dom}\alpha|=|\operatorname{dom}\gamma|=|\operatorname{ran}\gamma|=|\operatorname{ran}\beta|=|\operatorname{dom}\beta|.
\end{equation*}

Припустимо, що $|\operatorname{dom}\alpha|=|\operatorname{dom}\beta|$ для деяких $\alpha,\beta\in\mathscr{O\!\!I}\!_n(L)$, і нехай
\begin{equation*}
  \alpha=
\begin{pmatrix}
x_1 & \cdots & x_k\\
y_1 & \cdots & y_k
\end{pmatrix}
\qquad
\hbox{i}
\qquad
\beta=
\begin{pmatrix}
u_1 & \cdots & u_k\\
v_1 & \cdots & v_k
\end{pmatrix},
\end{equation*}
для деяких $x_1,\ldots x_k,y_1,\ldots y_k,u_1,\ldots u_k,v_1,\ldots v_k\in L$, $x_1<\cdots< x_k$, $y_1<\cdots< y_k$, $u_1<\cdots< u_k$, $v_1<\cdots< v_k$, $k\leq n$. Приймемо
\begin{equation*}
    \gamma=
\begin{pmatrix}
y_1 & \cdots & y_k\\
u_1 & \cdots & u_k
\end{pmatrix}.
\end{equation*}
З тверджень (3) i (4) випливає, що $\alpha\mathscr{L}\gamma$ і $\gamma\mathscr{R}\beta$ в $\mathscr{O\!\!I}\!_n(L)$, а отже, $\alpha\mathscr{D}\beta$ в $\mathscr{O\!\!I}\!_n(L)$.

(7) За теоремою 1 з \cite{Koch-Wallace-1957} у стійких напівгрупах відношення Ґріна $\mathscr{D}$ i $\mathscr{J}$ збігаються. Далі ско\-рис\-таємося теоремою~\ref{theorem-2.1}.
\end{proof}

Нагадаємо \cite{Lawson-1998}, що інверсна напівгрупа $S$ називається \emph{комбінаторною}, якщо кожен її $\mathscr{H}$-клас є одноелементною множиною. З твердження \ref{proposition-2.2}(5) випливає

\begin{corollary}
$\mathscr{O\!\!I}\!_n(L)$ --- комбінаторна інверсна напівгрупа.
\end{corollary}

Також з висловлень (6) i (7) твердження \ref{proposition-2.2} випливає такий наслідок.

\begin{corollary}\label{corollary-2.3}
Нехай $n$~--- довільне натуральне число. Тоді сім'я $$\left\{I_k=\mathscr{O\!\!I}\!_k(L)\colon k=0,1,\ldots,n\right\}$$ міс\-тить усі двобічні ідеали напівгрупи $\mathscr{O\!\!I}\!_n(L)$.
\end{corollary}

\begin{lemma}\label{lemma-2.5}
Нехай $\mathfrak{C}$~--- конґруенція на напівгрупі $\mathscr{O\!\!I}\!_n(L)$. Якщо $\alpha\mathfrak{C}\boldsymbol{0}$ для деякого $\alpha\in \mathscr{O\!\!I}\!_n(L)$, то $\alpha\mathfrak{C}\beta$ для всіх $\beta\in \mathscr{O\!\!I}\!_n(L)$ таких, що $\operatorname{rank}\beta\leq\operatorname{rank}\alpha$.
\end{lemma}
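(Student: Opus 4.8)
The plan is to reduce the statement to a claim about idempotents and then to transport idempotents around using the inverse structure of $\mathscr{O\!\!I}\!_n(L)$. Since $\mathfrak{C}$ is an equivalence relation and $\alpha\mathfrak{C}\boldsymbol{0}$, by symmetry and transitivity it suffices to prove that $\beta\mathfrak{C}\boldsymbol{0}$ for every $\beta\in\mathscr{O\!\!I}\!_n(L)$ with $\operatorname{rank}\beta\leq\operatorname{rank}\alpha=:k$; the desired relation $\alpha\mathfrak{C}\beta$ then follows at once. Because $\mathscr{O\!\!I}\!_n(L)$ is inverse and $\boldsymbol{0}$ is its zero, I will repeatedly use that $\gamma\boldsymbol{0}\delta=\boldsymbol{0}$ for all $\gamma,\delta$ and that $\mathfrak{C}$ is compatible with left and right multiplication.

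First I would produce an idempotent of rank $k$ that is $\mathfrak{C}$-related to $\boldsymbol{0}$. Multiplying $\alpha\mathfrak{C}\boldsymbol{0}$ on the left by $\alpha^{-1}$ gives $e:=\alpha^{-1}\alpha\mathfrak{C}\alpha^{-1}\boldsymbol{0}=\boldsymbol{0}$, where by Proposition~\ref{proposition-2.2}(1) the element $e$ is the partial identity on $\operatorname{dom}\alpha$. Next, for any partial identity $f$ whose domain is a subset of $\operatorname{dom}\alpha$ we have $f\subseteq e$, whence $f=fe\mathfrak{C}f\boldsymbol{0}=\boldsymbol{0}$. Thus every idempotent supported on a subset of $\operatorname{dom}\alpha$ is $\mathfrak{C}$-related to $\boldsymbol{0}$; in particular this lets me lower the rank freely below $k$.

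The decisive step is to move an idempotent off the support of $\alpha$ onto an arbitrary chain of the same size. Let $g$ be any idempotent with $\operatorname{rank}g=m\leq k$; choose an $m$-element subset $D'\subseteq\operatorname{dom}\alpha$ and let $f$ be the partial identity on $D'$, so that $f\mathfrak{C}\boldsymbol{0}$ by the previous paragraph. Since $L$ is linearly ordered, the two $m$-element chains $D'$ and $\operatorname{dom}g$ are order isomorphic, so Lemma~\ref{lemma-2.1} supplies a (unique) element $\mu\in\mathscr{O\!\!I}\!_n(L)$ with $\operatorname{dom}\mu=D'$ and $\operatorname{ran}\mu=\operatorname{dom}g$. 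A direct check gives $\mu^{-1}\mu=f$ and $\mu\mu^{-1}=g$, hence $\mu f\mu^{-1}=\mu\mu^{-1}\mu\mu^{-1}=g$, and therefore $g=\mu f\mu^{-1}\mathfrak{C}\mu\boldsymbol{0}\mu^{-1}=\boldsymbol{0}$. I expect this transport to be the main obstacle: it is precisely the place where linearity of $(L,\leqslant)$ is indispensable, since in a general poset two $m$-element subsets need not be order isomorphic and no such $\mu$ would be available.

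Finally I would pass from idempotents back to arbitrary partial isomorphisms. Given $\beta$ with $\operatorname{rank}\beta=m\leq k$, the idempotent $g:=\beta\beta^{-1}$, which is the partial identity on $\operatorname{ran}\beta$, has rank $m\leq k$, so $g\mathfrak{C}\boldsymbol{0}$ by the decisive step; consequently $\beta=\beta\beta^{-1}\beta=g\beta\mathfrak{C}\boldsymbol{0}\beta=\boldsymbol{0}$. Combining this with $\alpha\mathfrak{C}\boldsymbol{0}$ yields $\alpha\mathfrak{C}\beta$ for every $\beta$ with $\operatorname{rank}\beta\leq\operatorname{rank}\alpha$, which is the assertion of the lemma.
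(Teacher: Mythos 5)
Your proof is correct, but it reaches the conclusion by a different route than the paper. The paper proceeds by a three-case analysis on $\beta$: when $\operatorname{rank}\beta=\operatorname{rank}\alpha$ it sandwiches $\beta=\gamma\alpha\delta$ between the two order isomorphisms $\gamma,\delta$ supplied by Lemma~\ref{lemma-2.1} (using the uniqueness statement there), obtaining $\beta\mathfrak{C}\gamma\boldsymbol{0}\delta=\boldsymbol{0}$ directly, with no idempotents; when $\beta\preccurlyeq\alpha$ it writes $\beta=\beta\beta^{-1}\alpha\mathfrak{C}\beta\beta^{-1}\boldsymbol{0}=\boldsymbol{0}$, citing Lemma~1.4.6 of Lawson; and in the remaining case it inserts the partial identity $\varepsilon$ on a subset of $\operatorname{dom}\alpha$ of cardinality $\operatorname{rank}\beta$ and funnels $\varepsilon\alpha$ through the first two cases. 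You instead collapse the whole set $\{\beta\colon\operatorname{rank}\beta\leq k\}$ onto the zero class by factoring through idempotents: $e\mathfrak{C}\boldsymbol{0}$, then $f=fe\mathfrak{C}f\boldsymbol{0}=\boldsymbol{0}$ for every sub-identity $f\subseteq e$, then the conjugation $g=\mu f\mu^{-1}\mathfrak{C}\mu\boldsymbol{0}\mu^{-1}=\boldsymbol{0}$ transports this to every idempotent of rank $\leq k$, and finally the absorption $\beta=g\beta\mathfrak{C}\boldsymbol{0}\beta=\boldsymbol{0}$ handles arbitrary elements. Both arguments rest on the same two pillars --- Lemma~\ref{lemma-2.1} and compatibility of $\mathfrak{C}$ with multiplication by the zero --- but yours removes the case split on the natural partial order and the external appeal to Lawson's Lemma~1.4.6 (your identity $f=fe$ is exactly its elementary idempotent instance), so it is somewhat more uniform; the paper's version, in exchange, isolates the equal-rank case $\beta=\gamma\alpha\delta$ in a form that is reused in the proof of Lemma~\ref{lemma-2.6}. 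One cosmetic slip: the paper composes maps left to right, $x(\alpha\beta)=(x\alpha)\beta$, so under its conventions $\alpha^{-1}\alpha$ is the partial identity on $\operatorname{ran}\alpha$, not $\operatorname{dom}\alpha$, and your transport step should read $\mu^{-1}f\mu=g$ with $\operatorname{dom}\mu=D'$ and $\operatorname{ran}\mu=\operatorname{dom}g$ (equivalently, interchange $\mu$ and $\mu^{-1}$ throughout). Since $|\operatorname{dom}\alpha|=|\operatorname{ran}\alpha|=k$ and the roles of $\mu$ and $\mu^{-1}$ are symmetric, every step of your argument survives this swap unchanged, so the slip does not affect correctness.
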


\begin{proof}
Розглянемо можливі випадки:
\begin{enumerate}
  \item[(1)] $\operatorname{rank}\beta=\operatorname{rank}\alpha$;
  \item[(2)] $\operatorname{rank}\beta<\operatorname{rank}\alpha$ i $\beta\preccurlyeq\alpha$;
  \item[(3)] $\operatorname{rank}\beta<\operatorname{rank}\alpha$ i $\beta\not\preccurlyeq\alpha$.
\end{enumerate}

(1) Якщо $\operatorname{rank}\beta=\operatorname{rank}\alpha$, то $|\operatorname{dom}\beta|=|\operatorname{ran}\beta|=|\operatorname{ran}\alpha|=|\operatorname{dom}\alpha|$. За лемою~\ref{lemma-2.1} існують часткові порядкові ізоморфізми $\gamma,\delta\in \mathscr{O\!\!I}\!_n(L)$ такі, що $\operatorname{dom}\beta=\operatorname{dom}\gamma$, $\operatorname{ran}\gamma=\operatorname{dom}\alpha$, $\operatorname{dom}\delta=\operatorname{ran}\alpha$ i $\operatorname{ran}\delta=\operatorname{ran}\beta$. Тоді маємо, що $\beta=\gamma\alpha\delta\mathfrak{C}\gamma\boldsymbol{0}\delta=\boldsymbol{0}$, а отже, $\beta\mathfrak{C}\boldsymbol{0}$ і $\beta\mathfrak{C}\alpha$.

(2) Якщо $\operatorname{rank}\beta<\operatorname{rank}\alpha$ i $\beta\preccurlyeq\alpha$, то за лемою~1.4.6 з \cite{Lawson-1998} $\beta=\beta\beta^{-1}\alpha\mathfrak{C}\beta\beta^{-1}\boldsymbol{0}$, а отже, $\beta\mathfrak{C}\boldsymbol{0}$ і $\beta\mathfrak{C}\alpha$.

(3) Припустимо, що $\operatorname{rank}\beta<\operatorname{rank}\alpha$ i $\beta\not\preccurlyeq\alpha$. Нехай $A$ --- підмножина в $\operatorname{dom}\alpha$ така, що $|A|=\operatorname{rank}\beta<\operatorname{rank}\alpha$. Позначимо через $\varepsilon$  тотожне відображення на множині $A$. Тоді $\varepsilon\alpha\preccurlyeq\alpha$ i $\operatorname{rank}(\varepsilon\alpha)=\operatorname{rank}\beta$. З випадку (1) випливає, що $\varepsilon\alpha\mathfrak{C}\alpha\mathfrak{C}\boldsymbol{0}$, і за випадком (2) маємо, що $\varepsilon\alpha\mathfrak{C}\beta$, а отже, $\alpha\mathfrak{C}\beta$.
\end{proof}

\begin{lemma}\label{lemma-2.6}
Нехай $\mathfrak{C}$~--- конґруенція на напівгрупі $\mathscr{O\!\!I}\!_n(L)$ i $\alpha\mathfrak{C}\beta$ для деяких різних $\alpha,\beta\in \mathscr{O\!\!I}\!_n(L)$ таких, що $\beta\preccurlyeq\alpha$. Тоді $\alpha\mathfrak{C}\gamma$ для всіх $\gamma\in \mathscr{O\!\!I}\!_n(L)$ таких, що $\operatorname{rank}\gamma\leq\operatorname{rank}\alpha$.
\end{lemma}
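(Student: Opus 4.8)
The plan is to reduce the statement to Lemma~\ref{lemma-2.5} by showing that the hypothesis already forces $\alpha\mathfrak{C}\boldsymbol{0}$. Since $\beta\preccurlyeq\alpha$ we have $\beta\subseteq\alpha$ by Proposition~\ref{proposition-2.2}(2); I assume $\beta\neq\alpha$, since for $\beta=\alpha$ the relation $\alpha\mathfrak{C}\beta$ carries no information. Then $\beta\subsetneq\alpha$, so $\operatorname{rank}\beta<\operatorname{rank}\alpha$ and $\operatorname{ran}\beta\subsetneq\operatorname{ran}\alpha$. Multiplying $\alpha\mathfrak{C}\beta$ on the left by $\alpha^{-1}$ and using $\beta\subseteq\alpha$, one computes $\alpha^{-1}\alpha=\varepsilon_{\operatorname{ran}\alpha}$ and $\alpha^{-1}\beta=\varepsilon_{\operatorname{ran}\beta}$, where $\varepsilon_A$ denotes the identity map on a finite subset $A\subseteq L$ (an idempotent of $\mathscr{O\!\!I}\!_n(L)$ by Proposition~\ref{proposition-2.2}(1)). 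Hence $\varepsilon_{\operatorname{ran}\alpha}\mathfrak{C}\varepsilon_{\operatorname{ran}\beta}$ with $\operatorname{ran}\beta\subsetneq\operatorname{ran}\alpha$, so the whole problem reduces to a congruence identifying two comparable idempotents. Note that once $\varepsilon_{\operatorname{ran}\alpha}\mathfrak{C}\boldsymbol{0}$ is known, left multiplication by $\alpha$ gives $\alpha=\alpha\alpha^{-1}\alpha\mathfrak{C}\alpha\boldsymbol{0}=\boldsymbol{0}$, and then Lemma~\ref{lemma-2.5} applied to $\alpha$ yields exactly the asserted conclusion.

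Thus the heart of the matter is the claim: if $\varepsilon_B\mathfrak{C}\varepsilon_C$ for finite subsets $C\subsetneq B$ of $L$ with $p:=|B|\leq n$, then $\varepsilon_B\mathfrak{C}\boldsymbol{0}$. I would prove this by induction on $m=|C|$, the base $m=0$ being trivial as $\varepsilon_C=\boldsymbol{0}$. For the inductive step the engine is to produce, from $\varepsilon_B\mathfrak{C}\varepsilon_C$, a relation of the same kind with a strictly smaller bottom idempotent by shifting one copy against another. Using that $L$ is infinite I choose points $g_1<\cdots<g_{p+1}$ of $L$, and, after conjugating by the order isomorphism supplied by Lemma~\ref{lemma-2.1} (the claim is invariant under conjugation, which sends $\varepsilon_A$ to $\varepsilon_{(A)\mu}$ and fixes $\boldsymbol{0}$), I may assume $B=\{g_1,\dots,g_p\}$ and $C=\{g_j:j\in S\}$ for some nonempty $S\subseteq\{1,\dots,p\}$. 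Let $\mu\in\mathscr{O\!\!I}\!_n(L)$ be the shift $(g_j)\mu=g_{j+1}$, $1\le j\le p$. Conjugating the relation by $\mu$ gives $\varepsilon_{(B)\mu}\mathfrak{C}\varepsilon_{(C)\mu}$, and multiplying this with the original relation, using $\varepsilon_X\varepsilon_Y=\varepsilon_{X\cap Y}$, produces $\varepsilon_{B\cap (B)\mu}\mathfrak{C}\varepsilon_{C\cap (C)\mu}$.

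Here $B\cap (B)\mu=\{g_2,\dots,g_p\}$ has $p-1$ elements, while $C\cap (C)\mu=\{g_j:j\in S\cap(S+1)\}$ has at most $m-1$ elements, since $\min S$ cannot lie in $S+1$; moreover $C\cap (C)\mu\subsetneq B\cap (B)\mu$ because every index of the former lies in $\{2,\dots,p\}$ and $m-1<p-1$. Writing $B'=B\cap (B)\mu$ and $C^{*}=C\cap (C)\mu$, the pair $(B',C^{*})$ satisfies the hypothesis of the claim with $|C^{*}|<m$, so by the induction hypothesis $\varepsilon_{B'}\mathfrak{C}\boldsymbol{0}$. As $|B'|=p-1\geq m$ (because $m<p$), Lemma~\ref{lemma-2.5} applied to $\varepsilon_{B'}$ shows that every element of rank $\leq p-1$ is $\mathfrak{C}$-related to $\boldsymbol{0}$; in particular $\varepsilon_C\mathfrak{C}\boldsymbol{0}$, whence $\varepsilon_B\mathfrak{C}\varepsilon_C\mathfrak{C}\boldsymbol{0}$. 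This proves the claim, and with the reduction of the first paragraph it proves the lemma.

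I expect the genuine difficulty to be concentrated in the descent step: arranging a single order isomorphism $\mu$ whose intersection pattern with the two given idempotents strictly lowers the rank of the bottom idempotent while keeping it below the top one. The infinitude of $(L,\leqslant)$ enters exactly here, supplying the extra point $g_{p+1}$ that makes the unit shift available; the identities $\varepsilon_X\varepsilon_Y=\varepsilon_{X\cap Y}$ and $\mu^{-1}\varepsilon_A\mu=\varepsilon_{(A)\mu}$ (for $A\subseteq\operatorname{dom}\mu$) are routine and I would merely record them. The climb back from $\varepsilon_{B'}$ of rank $p-1$ up to $\varepsilon_B$ needs no separate top-rank argument: it is delivered automatically by Lemma~\ref{lemma-2.5} together with the original relation $\varepsilon_B\mathfrak{C}\varepsilon_C$.
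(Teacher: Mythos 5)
Your proof is correct, but it is organized quite differently from the paper's. Both arguments share the same skeleton: reduce to a pair of comparable idempotents, show that the hypothesis forces $\alpha\mathfrak{C}\boldsymbol{0}$, and then finish by Lemma~\ref{lemma-2.5}. The differences lie in the two middle stages. For the reduction to idempotents the paper cites abstract inverse-semigroup facts (Lemmas~2.3.4(1) and 1.4.7(4) of Lawson) to pass from $\alpha\mathfrak{C}\beta$ to $\alpha\alpha^{-1}\mathfrak{C}\beta\beta^{-1}$, whereas you compute directly that $\alpha^{-1}\beta=\varepsilon_{\operatorname{ran}\beta}$ when $\beta\subseteq\alpha$ --- equivalent in effect, and your version makes the containment $\operatorname{ran}\beta\subsetneq\operatorname{ran}\alpha$ explicit. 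For the descent to $\boldsymbol{0}$ the paper splits into cases: when $\operatorname{rank}\beta=\operatorname{rank}\alpha-1$ it builds a ladder of conjugating order isomorphisms $\iota_1,\iota_2,\ldots$ (each deleting the maximum of the current domain), producing a chain $\beta_1\succcurlyeq\beta_2\succcurlyeq\cdots$ whose rank drops by exactly one at each step until $\beta_k=\boldsymbol{0}$, which requires the explicit verifications $\iota_m\beta_{m-1}\iota_m^{-1}=\beta_m$, the idempotency computation for $\iota_m\beta_m\iota_m^{-1}$, and $\operatorname{rank}\beta_{m+1}=k-m-1$; the case $\operatorname{rank}\beta<\operatorname{rank}\alpha-1$ is then reduced to the previous one by inserting the intermediate idempotent $\varepsilon$ on $\operatorname{dom}\alpha\setminus\{x\}$ and using $\varepsilon=\varepsilon\alpha\mathfrak{C}\varepsilon\beta=\beta$. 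You instead run a single strong induction on the rank $m$ of the smaller idempotent, using one unit shift $\mu$ (available because $L$ is infinite, via Lemma~\ref{lemma-2.1}) together with the identity $\varepsilon_X\varepsilon_Y=\varepsilon_{X\cap Y}$ to pass from $(\varepsilon_B,\varepsilon_C)$ to $(\varepsilon_{B\cap(B)\mu},\varepsilon_{C\cap(C)\mu})$, where the observation that $\min S\notin S+1$ guarantees a strict drop of the bottom rank; you then re-invoke Lemma~\ref{lemma-2.5} inside the induction to climb from $\varepsilon_{B'}\mathfrak{C}\boldsymbol{0}$ back to $\varepsilon_B\mathfrak{C}\boldsymbol{0}$. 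What each approach buys: yours needs only a strict (not exact) rank decrease, so it avoids the paper's case split and the delicate bookkeeping of the ladder, at the price of using Lemma~\ref{lemma-2.5} twice and a normalization-by-conjugation step; the paper's version descends all the way to $\boldsymbol{0}$ in one explicit chain without appealing to Lemma~\ref{lemma-2.5} mid-argument. All the steps you flag as routine ($\mu^{-1}\varepsilon_A\mu=\varepsilon_{(A)\mu}$ for $A\subseteq\operatorname{dom}\mu$, compatibility of $\mathfrak{C}$, the transfer of the conclusion back through the normalizing conjugation, and the bound $|B\cap(B)\mu|=p-1\leq n$) do check out, so I see no gap.
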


\begin{proof}
Позаяк $\mathscr{O\!\!I}\!_n(L)$~--- інверсна напівгрупа, то з тверджень~2.3.4(1) і~1.4.7(4) монографії \cite{Lawson-1998} випливає, що $\alpha\alpha^{-1}\mathfrak{C}\beta\beta^{-1}$ і $\alpha\alpha^{-1}\preccurlyeq\beta\beta^{-1}$. Також, $\operatorname{rank}\alpha\alpha^{-1}=\operatorname{rank}\alpha$ i $\operatorname{rank}\beta\beta^{-1}=\operatorname{rank}\beta$.
Отже, не зменшуючи загальності, можемо вважати, що $\alpha$ i $\beta$~--- ідемпотенти напівгрупи $\mathscr{O\!\!I}\!_n(L)$.

Припустимо, що $\operatorname{rank}\alpha=k$ для деякого натурального числа $k\leq n$. Якщо $\operatorname{rank}\beta=0$, то скористаємося лемою~\ref{lemma-2.5}. Тому надалі будемо вважати, що $p=\operatorname{rank}\beta\neq 0$ i $p<k$.

Спочатку припустимо, що $p=k-1$ i $\operatorname{dom}\alpha=\left\{x_1,\ldots,x_k\right\}$. Позначимо $\beta_1=\beta$. Позаяк $L$~--- лінійно впорядкована множина, то за лемою~\ref{lemma-2.1} існує частковий порядковий ізоморфізм $\iota_1\in\mathscr{O\!\!I}\!_n(L)$ такий, що $\operatorname{dom}\iota_1=\operatorname{dom}\beta_1$ i $\operatorname{ran}\iota_1=\operatorname{dom}\alpha\setminus\{y_1\}$, де $y_1=\max\operatorname{dom}\beta_1$. Тоді $\iota_1\alpha\iota_1^{-1}=\beta_1$ i $\iota_1\beta_1\iota_1^{-1}\neq\beta_1$. Елемент $\iota_1\beta_1\iota_1^{-1}$ є ідемпотентом, оскільки
\begin{equation*}
\iota_1\beta_1\iota_1^{-1}\iota_1\beta_1\iota_1^{-1}=\iota_1\iota_1^{-1}\iota_1\beta_1\beta_1\iota_1^{-1}=\iota_1\beta_1\iota_1^{-1}.
\end{equation*}
Також, позаяк $\operatorname{dom}\iota_1=\operatorname{dom}\beta_1$, то $\beta_1\iota_1=\iota_1$, звідки випливає, що $\beta_1\iota_1\beta_1\iota_1^{-1}=\iota_1\beta_1\iota_1^{-1}$, а отже, $\iota_1\beta_1\iota_1^{-1}\preccurlyeq\beta$. Оскільки $\alpha\mathfrak{C}\beta=\beta_1$, то $\beta_2=\iota_1\beta_1\iota_1^{-1}\mathfrak{C}\iota_1\alpha\iota_1^{-1}=\beta_1$, а отже, $\beta_2\mathfrak{C}\beta_1\mathfrak{C}\alpha$. З визначення елемента $\beta_2\in\mathscr{O\!\!I}\!_n(L)$ випливає, що
\begin{equation*}
\operatorname{rank}\beta_2=\operatorname{rank}\beta_1-1=k-2.
\end{equation*}

Далі за індукцією для довільного $m=2,\ldots,k$ побудуємо послідовності елементів  $\iota_m\in\mathscr{O\!\!I}\!_n(L)$ та ідемпотентів $\beta_{m+1}\in\mathscr{O\!\!I}\!_n(L)$, які задовольняють  умови
\begin{enumerate}
  \item[(1)] $\iota_m\beta_{m-1}\iota_m^{-1}=\beta_m$ i $\iota_m\beta_m\iota_m^{-1}\neq\beta_m$;
  \item[(2)] $\beta_{m+1}=\iota_m\beta_m\iota_m^{-1}\mathfrak{C}\alpha$;
  \item[(3)] $\beta_{m+1}\preccurlyeq\beta_m$;
  \item[(4)] $\operatorname{rank}\beta_{m+1}=k-m-1$.
\end{enumerate}
Оскільки $L$~--- лінійно впорядкована множина, то за лемою~\ref{lemma-2.1} існує частковий порядковий ізоморфізм $\iota_m\in\mathscr{O\!\!I}\!_n(L)$ такий, що $\operatorname{dom}\iota_m=\operatorname{dom}\beta_m$ i $\operatorname{ran}\iota_m=\operatorname{dom}\beta_{m-1}\setminus\{y_m\}$, де $y_m=\max\operatorname{dom}\beta_m$.
Тоді $\iota_m\beta_{m-1}\iota_m^{-1}=\beta_m$ i $\iota_m\beta_m\iota_m^{-1}\neq\beta_m$. Елемент $\iota_m\beta_m\iota_m^{-1}$ є ідемпотентом, оскільки
\begin{equation*}
\iota_m\beta_m\iota_m^{-1}\iota_m\beta_m\iota_m^{-1}=\iota_m\iota_m^{-1}\iota_m\beta_m\beta_m\iota_m^{-1}=\iota_m\beta_m\iota_m^{-1}.
\end{equation*}
Також, позаяк $\operatorname{dom}\iota_m=\operatorname{dom}\beta_m$, то $\beta_m\iota_m=\iota_m$, звідки випливає, що $\beta_m\iota_m\beta_m\iota_m^{-1}=\iota_m\beta_m\iota_m^{-1}$, а отже, $\iota_m\beta_m\iota_m^{-1}\preccurlyeq\beta_m$. Оскільки $\alpha\mathfrak{C}\cdots\mathfrak{C}\beta_{m-1}\mathfrak{C}\beta_{m}$, то
\begin{equation*}
\beta_{m+1}=\iota_m\beta_m\iota_m^{-1}\mathfrak{C}\iota_m\beta_{m-1}\iota_m^{-1}=\beta_m,
\end{equation*}
а отже, $\beta_{m+1}\mathfrak{C}\beta_m\mathfrak{C}\cdots\mathfrak{C}\alpha$. З визначення елемента $\beta_{m+1}\in\mathscr{O\!\!I}\!_n(L)$ випливає, що
\begin{equation*}
\operatorname{rank}\beta_{m+1}=\operatorname{rank}\beta_m-1=k-m-1.
\end{equation*}

За вище викладеною побудовою маємо, що $\operatorname{rank}\beta_{k}=0$, а отже, $\beta_{k}=\boldsymbol{0}$~--- нуль напівгрупи $\mathscr{O\!\!I}\!_n(L)$. Отже, отримали, що $\alpha\mathfrak{C}\boldsymbol{0}$. Далі скористаємося лемою~\ref{lemma-2.5}.

Припустимо, що $p<k-1$. Тоді існує елемент $x\in L$ такий, що $x\in\operatorname{dom}\alpha\setminus\operatorname{dom}\beta$. Нехай $\varepsilon$~--- тотожне перетворення множини $\operatorname{dom}\alpha\setminus\{x\}$. Очевидно, що $\beta\preccurlyeq\varepsilon\preccurlyeq\alpha$ i $\operatorname{rank}\varepsilon=\operatorname{rank}\alpha-1$. Тоді $\varepsilon=\varepsilon\alpha\mathfrak{C}\varepsilon\beta=\beta$, а отже, $\varepsilon\mathfrak{C}\beta$. Звідки випливає, що $\varepsilon\mathfrak{C}\alpha$. Далі скористаємося попередньо доведеним фактом.
\end{proof}

\begin{lemma}\label{lemma-2.7}
Нехай $\mathfrak{C}$~--- конґруенція на напівгрупі $\mathscr{O\!\!I}\!_n(L)$ i $\alpha\mathfrak{C}\beta$ для деяких різних $\alpha,\beta\in \mathscr{O\!\!I}\!_n(L)$. Тоді $\alpha\mathfrak{C}\gamma$ для всіх $\gamma\in \mathscr{O\!\!I}\!_n(L)$ таких, що $\operatorname{rank}\gamma\leq\max\{\operatorname{rank}\alpha,\operatorname{rank}\beta\}$.
\end{lemma}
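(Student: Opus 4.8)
The plan is to funnel the whole argument through the single claim that the zero $\boldsymbol{0}$ lies in the common $\mathfrak{C}$-class of $\alpha$ and $\beta$, and then invoke Lemma~\ref{lemma-2.5}. Indeed, once $\alpha\mathfrak{C}\boldsymbol{0}$ is established, Lemma~\ref{lemma-2.5} applied to $\alpha$ gives $\alpha\mathfrak{C}\gamma$ for every $\gamma$ with $\operatorname{rank}\gamma\leq\operatorname{rank}\alpha$; since $\alpha\mathfrak{C}\beta$ forces $\beta\mathfrak{C}\boldsymbol{0}$ as well, Lemma~\ref{lemma-2.5} applied to $\beta$ (and transported back along $\alpha\mathfrak{C}\beta$) gives $\alpha\mathfrak{C}\gamma$ for every $\gamma$ with $\operatorname{rank}\gamma\leq\operatorname{rank}\beta$. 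The union of these two ranges is exactly $\{\gamma\colon\operatorname{rank}\gamma\leq\max\{\operatorname{rank}\alpha,\operatorname{rank}\beta\}\}$, which is the assertion. Thus the real content is the reduction to $\alpha\mathfrak{C}\boldsymbol{0}$.

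To produce $\boldsymbol{0}$ in the class I would pass from $\alpha,\beta$ to their associated idempotents. Exactly as in the proof of Lemma~\ref{lemma-2.6}, a congruence on the inverse semigroup $\mathscr{O\!\!I}\!_n(L)$ preserves inverses, so $\alpha\mathfrak{C}\beta$ yields both $\alpha\alpha^{-1}\mathfrak{C}\beta\beta^{-1}$ and $\alpha^{-1}\alpha\mathfrak{C}\beta^{-1}\beta$. Since $\alpha\neq\beta$, Proposition~\ref{proposition-2.2}(3)--(5) (equivalently the uniqueness in Lemma~\ref{lemma-2.1}) excludes the simultaneous equalities $\operatorname{dom}\alpha=\operatorname{dom}\beta$ and $\operatorname{ran}\alpha=\operatorname{ran}\beta$, for those would give $\alpha\mathscr{H}\beta$ and hence $\alpha=\beta$. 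Therefore at least one of $\operatorname{dom}\alpha\neq\operatorname{dom}\beta$ or $\operatorname{ran}\alpha\neq\operatorname{ran}\beta$ holds; I carry out the argument under $\operatorname{dom}\alpha\neq\operatorname{dom}\beta$, the remaining case being entirely symmetric (work with $\alpha^{-1}\alpha$, $\beta^{-1}\beta$ and multiply on the left where below I multiply on the right).

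Now set $\varepsilon=\alpha\alpha^{-1}$ and $\eta=\beta\beta^{-1}$, the partial identities on $\operatorname{dom}\alpha$ and $\operatorname{dom}\beta$; these are idempotents with $\varepsilon\mathfrak{C}\eta$. Their product $\varepsilon\eta$ is the partial identity on $\operatorname{dom}\alpha\cap\operatorname{dom}\beta$, so $\varepsilon\eta\preccurlyeq\varepsilon$ by Proposition~\ref{proposition-2.2}(2). As $\operatorname{dom}\alpha\neq\operatorname{dom}\beta$, one of the sets fails to contain the other, and swapping $\alpha,\beta$ if necessary (harmless, since they are $\mathfrak{C}$-equivalent) I may assume $\operatorname{dom}\alpha\not\subseteq\operatorname{dom}\beta$, so that the inclusion is proper, i.e. $\varepsilon\eta\neq\varepsilon$. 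Left-multiplying $\varepsilon\mathfrak{C}\eta$ by $\varepsilon$ gives $\varepsilon=\varepsilon\varepsilon\mathfrak{C}\varepsilon\eta$, hence I have a congruence relating the two distinct elements $\varepsilon$ and $\varepsilon\eta$ with $\varepsilon\eta\preccurlyeq\varepsilon$. Lemma~\ref{lemma-2.6} applied to the pair $(\varepsilon,\varepsilon\eta)$, taking $\gamma=\boldsymbol{0}$ (of rank $0\leq\operatorname{rank}\varepsilon$), yields $\varepsilon\mathfrak{C}\boldsymbol{0}$. Finally, right-multiplying by $\alpha$ and using $\varepsilon\alpha=\alpha\alpha^{-1}\alpha=\alpha$ gives $\alpha=\varepsilon\alpha\mathfrak{C}\boldsymbol{0}\alpha=\boldsymbol{0}$, which is the desired $\alpha\mathfrak{C}\boldsymbol{0}$.

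The step I expect to be the most delicate is the bookkeeping around the two reductions: one must ensure that after swapping $\alpha\leftrightarrow\beta$, or after moving to the range/inverse case, the final conclusion is still about the original $\alpha$. This is legitimate precisely because $\alpha\mathfrak{C}\beta$ places $\alpha$, $\beta$ and $\boldsymbol{0}$ in one $\mathfrak{C}$-class and because $\max\{\operatorname{rank}\alpha,\operatorname{rank}\beta\}$ is symmetric in $\alpha$ and $\beta$. Everything else is a routine application of the inverse-semigroup identities together with Lemmas~\ref{lemma-2.5} and~\ref{lemma-2.6}.
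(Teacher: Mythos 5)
Your proof is correct and follows essentially the same route as the paper: both arguments pass to the idempotents $\alpha\alpha^{-1}$, $\beta\beta^{-1}$ via preservation of inverses under a congruence, compare one of them with their product (which lies strictly below at least one of the two, since by Proposition~\ref{proposition-2.2}(5) the relation $\mathscr{H}$ is trivial and hence $\alpha\neq\beta$ forces the domain or range idempotents to differ), and then invoke Lemma~\ref{lemma-2.6}, whose mechanism already funnels through $\boldsymbol{0}$ and Lemma~\ref{lemma-2.5}. If anything, you are slightly more careful than the paper's terse ``without loss of generality $\alpha$ and $\beta$ are idempotents'': you explicitly handle the degenerate case $\alpha\alpha^{-1}=\beta\beta^{-1}$ by switching to the range idempotents $\alpha^{-1}\alpha$, $\beta^{-1}\beta$, and you spell out the final step combining the two applications of Lemma~\ref{lemma-2.5} to reach the bound $\max\{\operatorname{rank}\alpha,\operatorname{rank}\beta\}$.
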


\begin{proof}
Позаяк $\mathscr{O\!\!I}\!_n(L)$~--- інверсна напівгрупа, то за твердженнями~2.3.4(1) і~1.4.7(4) з \cite{Lawson-1998} маємо, що $\alpha\alpha^{-1}\mathfrak{C}\beta\beta^{-1}$ і $\alpha\alpha^{-1}\preccurlyeq\beta\beta^{-1}$. Також, $\operatorname{rank}\alpha\alpha^{-1}=\operatorname{rank}\alpha$ i $\operatorname{rank}\beta\beta^{-1}=\operatorname{rank}\beta$.
Отже, не зменшуючи загальності, можемо вважати, що $\alpha$ i $\beta$~--- ідемпотенти напівгрупи $\mathscr{O\!\!I}\!_n(L)$.

Позаяк $\alpha=\alpha\alpha\mathfrak{C}\alpha\beta\mathfrak{C}\beta\beta=\beta$, то $\alpha\mathfrak{C}\alpha\beta\mathfrak{C}\beta$. З умови $\alpha\neq\beta$ випливає, що $\alpha\beta\preccurlyeq\alpha$ i $\alpha\beta\preccurlyeq\beta$. Далі скористаємося лемою~\ref{lemma-2.6}.
\end{proof}

\begin{theorem}\label{theorem-2.8}
Нехай $(L,\leqslant)$~--- нескінченна лінійно впорядкована множина. Для довільного натурального числа $n$ кожна конґруенція на напівгрупі $\mathscr{O\!\!I}\!_n(L)$ є конґруенцією Ріса.
\end{theorem}

\begin{proof}
Нехай $\alpha$~--- елемент напівгрупи $\mathscr{O\!\!I}\!_n(L)$ з $\operatorname{rank}\alpha=k\leq n$ такий, що виконуються умови:
\begin{enumerate}
  \item[(1)] існує елемент $\beta\neq\alpha$ напівгрупи $\mathscr{O\!\!I}\!_n(L)$ з $\operatorname{rank}\beta\leq \operatorname{rank}\alpha$ такий, що $\alpha\mathfrak{C}\beta$;

  \item[(2)] для довільного $\gamma\in\mathscr{O\!\!I}\!_n(L)$ з $\operatorname{rank}\gamma>k$ елемент $\gamma$ $\mathfrak{C}$-еквівалентний лише $\gamma$ у випадку, коли $k<n$.
\end{enumerate}
За лемою~\ref{lemma-2.7} усі елементи ідеала $I_k=\mathscr{O\!\!I}\!_k(L)$ напівгрупи $\mathscr{O\!\!I}\!_n(L)$ є $\mathfrak{C}$-еквівалентними, а отже, конґруенція $\mathfrak{C}$ породжена ідеалом $I_k$. Очевидно, що одинична та універсальна конґруєції є конґруенціями Ріса на $\mathscr{O\!\!I}\!_n(L)$, оскільки вони породжені ідеалами $I_0=\boldsymbol{0}$ i $I_n=\mathscr{O\!\!I}\!_n(L)$, відповідно.
\end{proof}

\begin{remark}
У праці \cite{Gutik-Popadiuk=2023} доведено, що на напівгрупі $\mathscr{I}_\omega^{n}(\overrightarrow{\mathrm{conv}})$ усіх часткових по\-ряд\-ко\-во-опуклих ізоморфізмв лінійно впорядкованої множини $(\omega,\leq)$ рангу $\leqslant n$ відношення Ґріна $\mathscr{D}$ i $\mathscr{J}$ збігаються, і крім того, кожна конґруенція на $\mathscr{I}_\omega^{n}(\overrightarrow{\mathrm{conv}})$ є конґруенцією Ріса. Очевидно, що для довільного натурального числа $n$ напівгрупа $\mathscr{I}_\omega^{n}(\overrightarrow{\mathrm{conv}})$ є піднапівгрупою в $\mathscr{O\!\!I}\!_n(\omega)$.
\end{remark}

Нагадаємо \cite{Gutik-Lawson-Repov=2009}, що нескінченна підмножина $D$ напівгрупи $S$ називається \emph{$\omega$-нестій\-кою}, якщо $sB\cup Bs\nsubseteq D$ для довільних $s\in D$ і нескінченої підмножини $B$ у $D$. Будемо говорити, що \emph{напівгрупа $S$ має щільний ряд ідеалів} $J_0\subseteq J_1\subseteq\cdots\subseteq J_m$, $m\in\mathbb{N}$, якщо $J_0$~--- скінченний ідеал в $S$ i $J_k\setminus J_{k-1}$~--- $\omega$-нестійка підмножина в $S$ для довільного $k=1,\ldots,m$.

Надалі через $I_0\subseteq I_1\subseteq \cdots\subseteq I_n$ позначатимемо ряд ілеалів, визначений у формулюванні наслід\-ку~\ref{corollary-2.3}. Очевидно, що $I_0=\{\boldsymbol{0}\}$. Також з твердження~\ref{proposition-2.2} і наслідку~\ref{corollary-2.3} випливає, що нескінченна множина $I_k\setminus I_{k-1}$ є $\mathscr{D}$-класом напівгрупи $\mathscr{O\!\!I}\!_n(L)$, що складається з елементів рангу $k$. Отже, для довільного елемента $\alpha\in I_k\setminus I_{k-1}$ і довільної нескінченної множини $B\subseteq I_k\setminus I_{k-1}$ існує елемент $\beta\in I_k\setminus I_{k-1}$ такий, що $\operatorname{dom}\beta\neq\operatorname{ran}\alpha$ або $\operatorname{dom}\alpha\neq\operatorname{ran}\beta$. З вище наведених міркувань і визначення напівгрупової операції на $\mathscr{O\!\!I}\!_n(L)$ випливає, що $\left\{\alpha\beta,\beta\alpha\right\}\nsubseteq I_k\setminus I_{k-1}$, а отже, $I_k\setminus I_{k-1}$~--- $\omega$-нестійка підмножина в напівгрупі $\mathscr{O\!\!I}\!_n(L)$ для довільного $k=1,\ldots,n$. Отож, ми довели таке твердження

\begin{proposition}\label{proposition-2.4}
$I_0\subseteq I_1\subseteq \cdots\subseteq I_n$~--- щільний ряд ідеалів у $\mathscr{O\!\!I}\!_n(L)$ для довільного натурального числа $n$.
\end{proposition}

Відомо, що напівгрупи зі щільними рядами ідеалів зберігаються скінченними прямими до\-бут\-ка\-ми та гомоморфізмами, для яких кожен елемент з образу має скінченний про\-образ~ \cite{Gutik-Lawson-Repov=2009}.

\begin{theorem}\label{theorem-2.9}
Нехай $(L,\leqslant)$~--- нескінченна лінійно впорядкована множина, $n$ --- довільне натуральне число, $S$~--- напівгрупа та $\mathfrak{h}\colon \mathscr{O\!\!I}\!_n(L)\to S$~--- неанулюючий гомоморфізм. Тоді гомоморфний образ $(\mathscr{O\!\!I}\!_n(L))\mathfrak{h}$~--- напівгрупа зі щільними рядами ідеалів.
\end{theorem}

\begin{proof}
Твердження теореми очевидне у випадку, коли $\mathfrak{h}\colon \mathscr{O\!\!I}\!_n(L)\to S$~--- ін'єк\-тив\-ний гомоморфізм. Позаяк $\mathfrak{h}\colon \mathscr{O\!\!I}\!_n(L)\to S$~--- неанулюючий гомоморфізм і за теоремою \ref{theorem-2.8} кожна конґруенція на напівгрупі є конґруенцією Ріса, що існує натуральне число $k\leq n$ таке, то виконуються такі умови:
\begin{enumerate}
  \item[(1)] $(\alpha)\mathfrak{h}=(\beta)\mathfrak{h}$ для довільних $\alpha,\beta\in I_k$;
  \item[(2)] $(\gamma)\mathfrak{h}\neq(\delta)\mathfrak{h}$ для довільних різних $\gamma,\delta\in \mathscr{O\!\!I}\!_n(L)\setminus I_k$.
\end{enumerate}
Отож, отримуємо, що гомоморфний образ $(I_k)\mathfrak{h}$~--- нуль напівгрупи $(\mathscr{O\!\!I}\!_n(L))\mathfrak{h}$, а також, що звуження $\mathfrak{h}{\upharpoonright}_{\mathscr{O\!\!I}\!_n(L)\setminus I_k}\colon \mathscr{O\!\!I}\!_n(L)\setminus I_k\to S$ гомоморфізму $\mathfrak{h}$ є ін'єктивним відображенням. З останньої умови випливає, що множина $(I_m)\mathfrak{h}\setminus (I_{m-1})\mathfrak{h}$~--- $\omega$-нестійка підмножина в напівгрупі $(\mathscr{O\!\!I}\!_n(L))\mathfrak{h}$ для довільного $m=k+1,\ldots,n$. Справді, для довільних елементів $a,b\in(I_m)\mathfrak{h}\setminus (I_{m-1})\mathfrak{h}$ ($m=k+1,\ldots,n$), їхні повні прообрази $(a)\mathfrak{h}^{-1}$ i $(b)\mathfrak{h}^{-1}$ є одно\-еле\-мент\-ними підмножинами в напівгрупі $\mathscr{O\!\!I}\!_n(L)$. Нехай $\alpha=(a)\mathfrak{h}^{-1}$ i $\beta=(b)\mathfrak{h}^{-1}$. Тоді $\alpha,\beta\in I_m\setminus I_{m-1}$, і з визначення напівгрупової операції на $\mathscr{O\!\!I}\!_n(L)$ випливає, що $\left\{\alpha\beta,\beta\alpha\right\}\nsubseteq I_m\setminus I_{m-1}$. Звідси, врахувавши умови (1) і (2), отримуємо, що виконується хоча б одна з умов або
\begin{equation*}
ab=(\alpha)\mathfrak{h}(\beta)\mathfrak{h}=(\alpha\beta)\mathfrak{h}\notin (I_m)\mathfrak{h}\setminus (I_{m-1})\mathfrak{h},
\end{equation*}
або
\begin{equation*}
b a=(\beta)\mathfrak{h}(\alpha)\mathfrak{h}=(\beta\alpha)\mathfrak{h}\notin (I_m)\mathfrak{h}\setminus (I_{m-1})\mathfrak{h},
\end{equation*}
тобто $(I_m)\mathfrak{h}\setminus (I_{m-1})\mathfrak{h}$~--- $\omega$-нестійка підмножина в $(\mathscr{O\!\!I}\!_n(L))\mathfrak{h}$ для $m=k+1,\ldots,n$. Отже, виконується твердження теореми.
\end{proof}


\section*{\textbf{Подяка}}

Автори висловлюють щиру подяку  рецензентові за цінні поради та зауваження.


\end{document}